\documentclass[11pt,reqno]{amsart}
\oddsidemargin = 0cm \evensidemargin = 0cm \textwidth = 16cm

 \usepackage{amssymb,amsfonts,amscd,amsbsy, color, amsmath}
\usepackage[mathscr]{eucal}
\usepackage{url}
\usepackage{graphicx}
\usepackage{mathtools}
\usepackage{todonotes}
\usepackage{tabularx}

\newtheorem{theorem}{Theorem}[section]

\newtheorem{problem}[theorem]{Problem}
\theoremstyle{definition}

\numberwithin{equation}{section}

\newcommand{\Z}{\mathbb{Z}}

\makeatletter
\def\imod#1{\allowbreak\mkern5mu({\operator@font mod}\,\,#1)}
\makeatother
\allowdisplaybreaks

\makeatletter
\@namedef{subjclassname@2020}{%
  \textup{2020} Mathematics Subject Classification}
\makeatother

\begin{document}

\title[A $q$-multisum identity arising from finite chain ring probabilities]{A $q$-multisum identity arising from finite chain ring probabilities}

\author{Jehanne Dousse}

\author{Robert Osburn}

\address{Univ Lyon, CNRS, Universit{\'e} Claude Bernard Lyon 1, UMR5208, Institut Camille Jordan, F-69622 Villeurbanne, France}

\address{School of Mathematics and Statistics, University College Dublin, Belfield, Dublin 4, Ireland}

\email{dousse@math.cnrs.fr}

\email{robert.osburn@ucd.ie}

\subjclass[2020]{16P10, 16P70, 33D15}
\keywords{Finite chain ring, Hecke-type series, Appell-Lerch sums, $q$-multisum identity}

\date{\today}

\begin{abstract}
In this note, we prove a general identity between a $q$-multisum $B_N(q)$ and a sum of $N^2$ products of quotients of theta functions. The $q$-multisum $B_N(q)$ recently arose in the computation of a probability involving modules over finite chain rings.
\end{abstract}

\maketitle

\section{Introduction}

Probabilistic proofs of classical $q$-series identities constitute an intriguing part of the literature in combinatorics. A prominent example of this perspective concerns the Andrews-Gordon identities \cite{and, gor} which state for $1 \leq i \leq k$ and $k \geq 2$

\begin{equation} \label{AG}
\sum_{n_1, \dotsc, n_{k-1} \geq 0} \frac{q^{N_1^2 + \cdots + N_{k-1}^2 + N_1 + \cdots + N_{k-1}}}{(q)_{n_1} \cdots (q)_{n_k}} = \prod_{\substack{s=1 \\ s \not \equiv 0, \pm i \pmod{2k+1}}}^{\infty} \frac{1}{1-q^s},
\end{equation}
where $N_j = n_j + \cdots + n_{k-1}$. Here and throughout, we use the standard $q$-hypergeometric (or ``$q$-Pochhammer symbol") notation

\begin{equation*}
(a)_n = (a;q)_n := \prod_{k=0}^{n-1} (1-aq^{k}),
\end{equation*}
valid for $n \in \mathbb{N} \cup \{\infty \}$. In \cite{ful}, Fulman uses a Markov chain on the nonnegative integers to prove the extreme cases $i=1$ and $i=k$ of (\ref{AG}). Chapman \cite{chap} cleverly extends Fulman's methods to prove (\ref{AG}) in full generality. In \cite{cohen}, Cohen explicitly computes probability laws of $p^{\ell}$-ranks of finite abelian groups to give a group-theoretic proof of (\ref{AG}). For a generalization of this computation, see \cite{del}. In this note, we are interested in a recent probability computation with a ring-theoretic flavor as it leads to an expression similar to the left-hand side of (\ref{AG}). 

Our focus is on finite chain rings, a notion we now briefly recall (for further details, see Section 2 in both \cite{BHKW} and \cite{hl}). A ring is called a left (resp. right) chain ring if its lattice of left (resp. right) ideals forms a chain. Any finite chain ring is a local ring, i.e., it has a unique maximal ideal which coincides with its radical. Let $\mathcal{R}$ be a finite chain ring with radical $\mathcal{N}$, $q$ be the order of the residue field $\mathcal{R}/\mathcal{N}$ and $N$ be the index of nilpotency of $\mathcal{N}$. Recently, the authors of \cite{BHKW} expressed the density $\psi(n,k,q,N)$ of free submodules $\mathcal{M}$ of $\mathcal{R}^n$ (over $\mathcal{R}$) of length $k:=\log_{q} (|\mathcal{M}|)$ as $n \to \infty$ as the reciprocal of the $q$-multisum (replacing $1/q$ in their notation with $q$)

\begin{equation}
\label{eq:BHKWsum}
B_N(q) :=\sum_{\substack{K_2, \dots , K_N \geq 0\\N |K_2+\dots+K_N}} \frac{q^{K_2^2 + \cdots + K_N^2 - (K_2 + \cdots +K_N)^2/N}}{(q)_{k_2} \cdots (q)_{k_N}},
\end{equation}
where $N \geq 2$ is an integer and $K_i = \sum_{j=2}^i k_j$. Upper and lower bounds for $B_N(q)$ are obtained and then used to show (under suitable conditions) that $\psi(n,k,q,N)$ is at least $1-\epsilon$ where $0 < \epsilon < 1$ (see Theorems 6 and 8, respectively, in \cite{BHKW}). Moreover, we have
 \begin{equation} \label{slater83}
B_2(q) = \prod_{\substack{s=1 \\ s \equiv \pm 2, \pm 3, \pm 4, \pm 5 \pmod{16}}}^{\infty} \frac{1}{1-q^s},
\end{equation}
which is (S.83) in \cite{McSZ}. In view of (\ref{AG}) and (\ref{slater83}), the authors in \cite{BHKW} posed the following (slightly rewritten) problem.
\begin{problem} \label{prob}
Determine whether $B_N(q)$ can be expressed as a product of $q$-Pochhammer symbols.
\end{problem}

The purpose of this note is to solve Problem \ref{prob}. It turns out that the solution is slightly more involved than either (\ref{AG}) or (\ref{slater83}), namely $B_N(q)$ is a sum of $N^2$ products of quotients of theta functions (but not a single product of $q$-Pochhammer symbols, for general $N$). Before stating our main result, we recall some further standard notation: 
\begin{align*}
j(x;q) &:=(x)_{\infty}(q/x)_{\infty}(q)_{\infty}, \\
j(x_1, x_2, \dotsc, x_n;q) &:= j(x_1;q) j(x_2;q) \cdots j(x_n;q), \\
J_{a,m} &:= j(q^a;q^m), \\
\overline{J}_{a,m} &:= j(-q^a;q^m),\\
J_{m} &:= J_{m,3m} = (q^m;q^m)_{\infty}.
\end{align*}
Note that these quantities are products of $q$-Pochhammer symbols. Our main result is now the following.

\begin{theorem}
\label{main}
For all $N \geq 2$, we have
\begin{align} \label{theta}
&B_N(q)= \frac{1}{(q)_{\infty}^2 \overline{J}_{0,N(N+2)}} \sum_{r=0}^{N-1} \sum_{s=0}^{N-1} \frac{(-1)^{r+s+1}q^{{r\choose 2} +{s+1\choose 2} + r(s+1)(N+1)+r+s+1} J_{N^2(N+2)}^3}{j((-1)^Nq^{N(N+2)r+N(N+3)/2};q^{N^2(N+2)})} \\ 
& \qquad \qquad \qquad \qquad \qquad \qquad \qquad \qquad \times \frac{j(-q^{N(s-r)};q^{N^2})j(q^{N(N+2)(r+s)+N(N+3)};q^{N^2(N+2)})}{j((-1)^Nq^{N(N+2)s+N(N+3)/2};q^{N^2(N+2)})}. \nonumber
\end{align}
\end{theorem}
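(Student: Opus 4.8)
The plan is to establish \eqref{theta} in three stages: (1) collapse the $(N-1)$-fold multisum \eqref{eq:BHKWsum} to a single \emph{Hecke-type double sum}; (2) decompose that double sum into Appell--Lerch sums $m(x,q,z):=j(z;q)^{-1}\sum_{n\in\Z}(-1)^{n}q^{\binom n2}z^{n}/(1-q^{n-1}xz)$; and (3) expand each Appell--Lerch sum as a finite sum of quotients of theta functions, producing the $N^{2}$ terms on the right of \eqref{theta}.

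Stage (1) is the crux. Two features of \eqref{eq:BHKWsum} must first be neutralised: the congruence $N\mid K_{2}+\dots+K_{N}$ and the \emph{negative}-definite contribution $-(K_{2}+\dots+K_{N})^{2}/N$ to the exponent. Reversing the order of the parts via $m_{b}:=k_{N+1-b}$ for $1\le b\le N-1$, one checks that $K_{2}+\dots+K_{N}=\sum_{b=1}^{N-1}b\,m_{b}$ and that the exponent in \eqref{eq:BHKWsum} becomes $\mathbf{m}^{\mathsf T}C^{-1}\mathbf{m}$ with $C$ the Cartan matrix of type $A_{N-1}$; thus $B_{N}(q)$ is, up to a normalising factor, a Lepowsky--Primc-type multisum --- a level-$N$ string function of the affine Lie algebra $A_{1}^{(1)}$ --- which is precisely the kind of sum known to collapse to an indefinite theta series. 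Concretely I would detect the congruence by the root-of-unity sieve $\mathbf 1[N\mid r]=\tfrac1N\sum_{t=0}^{N-1}e^{2\pi itr/N}$, carry out the resulting unconstrained but character-twisted summation over $m_{1},\dots,m_{N-1}$ by iterated Durfee-type dissections and the $q$-binomial theorem (this is the source of the factor $1/(q)_{\infty}^{2}$ in \eqref{theta}), and then pair the surviving $q^{-(\,\cdot\,)^{2}/N}$ against the theta function produced by the sieve, via the Jacobi triple product $j(x;q)=\sum_{n\in\Z}(-1)^{n}q^{\binom n2}x^{n}$, so that the two quadratic pieces fuse into a single \emph{indefinite} form. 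The outcome should be an identity expressing $(q)_{\infty}^{2}B_{N}(q)$ as a Hecke-type double sum $f_{1,N+1,1}(x,y,Q)$, where $f_{a,b,c}(x,y,Q):=\bigl(\sum_{r,s\ge0}+\sum_{r,s<0}\bigr)(-1)^{r+s}x^{r}y^{s}Q^{a\binom r2+brs+c\binom s2}$ and $x,y,Q$ are suitable powers of $q$; the cross-term coefficient $N+1$ and the discriminant $(N+1)^{2}-1=N(N+2)$ are already foreshadowed by the quadratic exponent $\binom r2+\binom{s+1}2+(N+1)r(s+1)+\cdots$ and by the moduli $q^{N^{2}}$, $q^{N^{2}(N+2)}$ in \eqref{theta}. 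An alternative to the sieve would be to reach the same double sum by iterating the Bailey lemma $N-2$ times starting from the Bailey pair that underlies the $N=2$ case \eqref{slater83}.

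For stages (2) and (3) I would invoke the structure theorems of Hickerson and Mortenson for Hecke-type double sums, and in particular their treatment of sums of type $f_{a,b,a}$: with the data produced above, $f_{1,N+1,1}(x,y,Q)$ equals a sum of Appell--Lerch sums to the base $q^{N^{2}(N+2)}$, any residual pure-theta term being absorbed into the factor $\overline{J}_{0,N(N+2)}$. To each such $m(x,q^{N^{2}(N+2)},z)$ I would apply the change-of-argument identity
\[
m(x,q,z_{1})-m(x,q,z_{0})=\frac{z_{0}\,J_{1}^{3}\,j(z_{1}/z_{0};q)\,j(xz_{0}z_{1};q)}{j(z_{0};q)\,j(z_{1};q)\,j(xz_{0};q)\,j(xz_{1};q)},
\]
together with the fact that, for the value of $x$ at hand, $m$ itself collapses to a finite sum of theta quotients; this accounts for the factor $J_{N^{2}(N+2)}^{3}$ and the three theta factors $j(\,\cdot\,;q^{N^{2}(N+2)})$ in \eqref{theta}, while the remaining factor $j(\,\cdot\,;q^{N^{2}})$ and one of the two summation indices come from unfolding a base change $q\mapsto q^{N}$. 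Running over the two resulting ranges of length $N$ yields the $N^{2}$ summands indexed by $r,s$, after which one matches exponents, signs and moduli term by term; that final reconciliation is long but mechanical, and the asymmetry $r\leftrightarrow s+1$ and the parity factor $(-1)^{N}$ are artefacts of it.

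I expect the main obstacle to be stage (1): pinning down the auxiliary variable and the precise monomials so that the character-twisted Durfee summation genuinely telescopes to $(q)_{\infty}^{-2}f_{1,N+1,1}(x,y,Q)$ rather than to some less tractable object --- everything downstream is an application of known, if intricate, identities. As consistency checks I would specialise $N=2$, where $f_{1,3,1}$ at the relevant parameters should collapse to a single theta quotient and reproduce \eqref{slater83} (that is, (S.83) of \cite{McSZ}), and I would test the first several $q$-coefficients of \eqref{theta} against \eqref{eq:BHKWsum} for $N=3$ and $N=4$.
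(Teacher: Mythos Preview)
Your three-stage outline is on the right track and, at the level of ``string function $\Rightarrow$ Hecke-type double sum $\Rightarrow$ Hickerson--Mortenson'', coincides with the paper's approach. But stages (2)--(3) contain a real misconception about how the $N^{2}$ theta quotients arise, and stage (1) is sketched at a level where it is not clear it would succeed.

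For stage (1), the paper does exactly what you suspect: after rewriting $B_{N}(q)$ via the $A_{N-1}$ Cartan matrix it observes $B_{N}(q)=q^{-m^{2}/4N}(q)_{\infty}\,\mathcal{C}^{N}_{m,0}(q)$ for any $m\equiv 0\pmod{2N}$, and then simply \emph{cites} the known indefinite-theta expression for the string function (Example~1.3 of \cite{HM}, going back to Kac--Peterson) to obtain
\[
(q)_{\infty}^{2}\,B_{N}(q)=q^{-m^{2}/4N}\,f_{1,N+1,1}\bigl(q^{1+m/2},q^{1-m/2},q\bigr).
\]
Your plan to re-derive this via a root-of-unity sieve and ``iterated Durfee-type dissections'' is vague; this identity encodes the Weyl--Kac character formula and is not a routine $q$-binomial manipulation. (Minor: your $f_{a,b,c}$ should have $\sum_{r,s\ge0}-\sum_{r,s<0}$, not $+$.)

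The genuine gap is in stages (2)--(3). Theorem~\ref{hickmort} (Hickerson--Mortenson) gives
\[
f_{1,N+1,1}(x,y,q)=g_{1,N+1,1}(x,y,q,-1,-1)+\frac{\theta_{1,N}(x,y,q)}{\overline{J}_{0,N(N+2)}},
\]
where $g$ is the Appell--Lerch part and $\theta_{1,N}$ is \emph{already} an explicit double sum over $0\le r^{*},s^{*}\le N-1$ of theta quotients. You have the roles reversed: you treat the Appell--Lerch piece as the main contribution to be converted into theta quotients, with the pure-theta piece ``absorbed'' into $\overline{J}_{0,N(N+2)}$. In fact the decisive observation --- one line --- is that for $a=c=1$ the prefactors in $g$ are $j(x;q)$ and $j(y;q)$; with $x=q^{1+m/2}$, $y=q^{1-m/2}$ and $m$ even these are $j$ of an integer power of $q$, hence \emph{zero}, so $g\equiv 0$ (one only checks that the accompanying $m$-functions, whose base is $q^{N(N+2)}$ rather than $q^{N^{2}(N+2)}$, have no poles, which holds for $m\equiv 0\pmod{2N}$). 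Setting $m=0$, the right-hand side of \eqref{theta} is literally $\theta_{1,N}(q,q,q)\big/\bigl[(q)_{\infty}^{2}\,\overline{J}_{0,N(N+2)}\bigr]$: the factor $J_{N^{2}(N+2)}^{3}$, the three $j(\,\cdot\,;q^{N^{2}(N+2)})$'s, the $j(\,\cdot\,;q^{N^{2}})$, and the ranges $0\le r,s\le N-1$ are all built into the definition of $\theta_{n,p}$, not produced by any change-of-argument identity for $m$ or base change $q\mapsto q^{N}$. Your plan to collapse each Appell--Lerch sum to theta quotients would not work anyway --- $m(x,q,z)$ is generically mock modular, and the identity you quote yields only the \emph{difference} $m(x,q,z_{1})-m(x,q,z_{0})$ as a theta quotient --- but fortunately it is never needed.
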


Formula (\ref{theta}) is of interest for at least two reasons. First, Andrews-Gordon type $q$-multisums akin to (\ref{AG}) are typically evaluated as single infinite products using $q$-series methods such as Bailey pairs, the triple product identity or the quintuple product identity. Instances of $q$-multisums which evaluate to sums of infinite products seem to be less well-studied and thus certainly require further attention. For pertinent work involving character formulas of irreducible highest weight modules of Kac-Moody algebras of affine type, see \cite{dk1, dk3}. Second, in order to compute asymptotics or find congruences for the coefficients of $q$-multisums, one would ideally prefer a single infinite product expression. In lieu of this situation, sums of infinite products are often still helpful. Indeed, contrarily to \eqref{eq:BHKWsum} which requires computing a $(N-1)$-fold sum, (\ref{theta}) only involves a double sum. As a comparison with Table 1 in \cite{BHKW}, we explicitly compute $B_N(q)$ for $2 \leq N \leq 10$ and $N=100$ and $1/q=2,3,5,7,11$ to five decimals with Maple using \eqref{theta}. Table \ref{tab: moreN} above suggests that when $q \to 0$, the limiting value of $B_N(q)$ is $1$. This statement is confirmed in \cite[Corollary 10, (1)]{BHKW}.

The paper is organized as follows. In Section 2, we recall one of the main results from \cite{HM}, then prove Theorem \ref{main}. In Section 3, we make some concluding remarks.

\begin{table}
\begin{tabular}{|c|c|c|c|c|c|}
 \hline
$N \ \setminus \ 1/q$ & $2$ & $3$ & $5$ & $7$ & $11$ \\ 
 \hline
$2$ & $0.59546$ &$0.84191$ &$0.95049$ &$0.97627$ &$0.99092$   \\ 
\hline 
$3$ & $0.47084$ &$0.79666$ &$0.94102$ &$0.97295$ & $0.99010$  \\ 
\hline 
$4$ & $0.42109$ &$0.78230$ &$0.93915$ &$0.97248$ &$0.99002$   \\ 
\hline 
$5$ & $0.39877$ & $0.77759$&$0.93877$ &$0.97241$ &$0.99002$   \\ 
\hline 
$6$ & $0.38819$ &$0.77603$ &$0.93870$ &$0.97240$ &$0.99002$   \\ 
\hline 
$7$ & $0.38304$ &$0.77551$ &$0.93868$ &$0.97240$ &$0.99002$   \\ 
\hline 
$8$ & $0.38050$ &$0.77533$ &$0.93868$ &$0.97240$ &$0.99002$   \\ 
\hline
$9$ & $0.37924$ &$0.77528$ &$0.93868$ &$0.97240$ &$0.99002$   \\ 
\hline 
$10$ & $0.37861$ &$0.77526$ &$0.93868$ &$0.97240$ & $0.99002$  \\ 
\hline
\hline 
$100$ & $0.37798$ &$0.77525$ &$0.93868$ &$0.97240$ & $0.99002$  \\ 
\hline
\hline
$(q)_{\infty}$ & $0.28879$ &$0.56013$ &$0.76033$ &$0.83680$ & $0.90083$  \\ 
\hline
\end{tabular}
\caption{\label{tab: moreN} Values of $B_N(q)$}
\end{table}

\section{Proof of Theorem \ref{main}}

Before the proof of Theorem \ref{main}, we need to recall some background from the important work of Hickerson and Mortenson \cite{HM}. First, we employ the Hecke-type series

\begin{equation} \label{f}
f_{a,b,c}(x,y,q) := \Bigl( \sum_{r,s \geq 0} - \sum_{r,s < 0} \Bigr) (-1)^{r+s} x^r y^s q^{a\binom{r}{2} + brs + c \binom{s}{2}}.
\end{equation}
Next, consider the Appell-Lerch series

\begin{equation} \label{m}
m(x,q,z) := \frac{1}{j(z;q)} \sum_{r \in \mathbb{Z}} \frac{(-1)^r q^{\binom{r}{2}} z^r}{1-q^{r-1}xz},
\end{equation}
where $x$, $z \in \mathbb{C}^{*} := \mathbb{C} \setminus \{0\}$ with neither $z$ nor $xz$ an integral power of $q$ in order to avoid poles. One of the main results in \cite{HM} expresses (\ref{f}) in terms of (\ref{m}). Let

\begin{equation} \label{g}
\begin{aligned}
g_{a,b,c}(x,y,q,z_1,z_0) & := \sum_{t=0}^{a-1} (-y)^t q^{c\binom{t}{2}} j(q^{bt}x; q^a) m\Bigl(-q^{a\binom{b+1}{2} - c\binom{a+1}{2} - t(b^2 - ac)} \frac{(-y)^a}{(-x)^b}, q^{a(b^2 - ac)}, z_0 \Bigr) \\
& + \sum_{t=0}^{c-1} (-x)^t q^{a\binom{t}{2}} j(q^{bt}y; q^c) m\Bigl(-q^{c\binom{b+1}{2} - a\binom{c+1}{2} - t(b^2 - ac)} \frac{(-x)^c}{(-y)^b}, q^{c(b^2-ac)}, z_1 \Bigr).
\end{aligned}
\end{equation}
Following \cite{HM}, we use the term ``generic" to mean that the parameters do not cause poles in the Appell-Lerch sums or in the quotients of theta functions. 

\begin{theorem}[\cite{HM}, Theorem 1.3]  \label{hickmort} Let $n$ and $p$ be positive integers with $(n,p)=1$. For generic $x$, $y \in \mathbb{C}^{*}$,
$$f_{n,n+p,n}(x,y,q) = g_{n,n+p,n}(x,y,q,-1,-1) + \frac{1}{\overline{J}_{0,np(2n+p)}} \theta_{n,p}(x,y,q),$$
where
$$
\begin{aligned}
\theta_{n,p}(x,y,q) & := \sum_{r^{*}=0}^{p-1} \sum_{s^{*}=0}^{p-1} q^{n\binom{r-(n-1)/2}{2} + (n+p)(r - (n-1)/2)(s+ (n+1)/2) + n\binom{s+ (n+1)/2}{2}} (-x)^{r - (n-1)/2} \\
& \times \frac{(-y)^{s + (n+1)/2} J_{p^2(2n+p)}^3 j(-q^{np(s-r)} \frac{x^n}{y^n}; q^{np^2}) j(q^{p(2n+p)(r+s) + p(n+p)} (xy)^p; q^{p^2(2n+p)})}{j(q^{p(2n+p)r + p(n+p)/2} \frac{(-y)^{n+p}}{(-x)^n}, q^{p(2n+p)s + p(n+p)/2} \frac{(-x)^{n+p}}{(-y)^n}; q^{p^2(2n+p)})}.
\end{aligned}
$$
Here, $r:= r^{*} + \{(n-1)/2\}$ and $s:= s^{*} + \{ (n-1)/2 \}$ with $0 \leq \{ \alpha \} < 1$ denoting the fractional part of $\alpha$.
\end{theorem}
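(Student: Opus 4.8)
The plan is to prove the identity by the sum-splitting strategy underlying \cite{HM}: first establish a \emph{master} decomposition of the Hecke-type double sum $f_{a,b,c}$ into Appell--Lerch sums plus a theta correction for arbitrary auxiliary parameters $z_0,z_1$, valid whenever the discriminant $b^2-ac$ is positive, and then specialize to $a=c=n$, $b=n+p$ and $z_0=z_1=-1$. The two standard tools I would use throughout are the change-of-$z$ property of (\ref{m}) — namely that $m(x,q,z_1)-m(x,q,z_0)$ equals an explicit quotient of theta functions, a consequence of the three-term Weierstrass-type relation for $j(x;q)$ — together with the Jacobi triple product and quintuple product identities. All of these are facts about the symbol $j(x;q)$ and its products $j(x_1,\dots,x_n;q)$ as defined above.

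First I would fix $q$ and slice the double sum (\ref{f}) along residue classes of the second summation index. Writing the exponent as the quadratic form $Q(r,s)=a\binom{r}{2}+brs+c\binom{s}{2}$ with $b^2-ac=p(2n+p)>0$, I would set $s=a\sigma+t$ with $t\in\{0,\dots,a-1\}$. For each fixed residue $t$, the cross term $brs$ couples $r$ to $\sigma$ in a way that lets the $r$-summation over the Hecke cone $\{r,s\ge 0\}\cup\{r,s<0\}$ reorganize into the single theta function $j(q^{bt}x;q^a)$, while the residual one-parameter sum over $\sigma$ assembles into a single Appell--Lerch sum $m(\,\cdot\,,q^{a(b^2-ac)},z_0)$; the denominator $1-q^{r-1}xz$ in (\ref{m}) is precisely the device that reproduces the geometric truncation created by the quadrant-difference $\bigl(\sum_{r,s\ge 0}-\sum_{r,s<0}\bigr)$. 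This yields the first sum of (\ref{g}), with $(-y)^t q^{c\binom{t}{2}}$ emerging as the residue-class constants and modulus $q^{a(b^2-ac)}=q^{np(2n+p)}$; slicing instead on the first index modulo $c$ produces the second sum by the symmetry $a\leftrightarrow c$, $x\leftrightarrow y$. The hypothesis $(n,p)=1$ is what guarantees that each residue class is met exactly once, so that nothing is over- or under-counted.

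The two slicings do not hand back the same auxiliary parameter, and each Appell--Lerch sum is only pinned down up to the change-of-$z$ theta term; reconciling the two representations and collecting the leftover boundary contributions is what produces the theta correction. I would then set $z_0=z_1=-1$ and expand the resulting theta quotients, using the triple and quintuple product identities to fold the one-dimensional theta sums into the two-dimensional array indexed by $r^{*},s^{*}\in\{0,\dots,p-1\}$. The shifts $r=r^{*}+\{(n-1)/2\}$ and $s=s^{*}+\{(n-1)/2\}$ enter here to keep the lattice points integral: when $n$ is odd the fractional part vanishes, while for $n$ even a half-integer shift is forced, and the signs carried by $(-x)^{r-(n-1)/2}$ and $(-y)^{s+(n+1)/2}$ in $\theta_{n,p}$ are exactly what track this parity.

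I expect the main obstacle to be this final simplification: matching the generic theta correction to the closed double-sum form $\frac{1}{\overline{J}_{0,np(2n+p)}}\,\theta_{n,p}(x,y,q)$. One must simultaneously control the many theta quotients $j(\,\cdot\,;q^{p^2(2n+p)})$ and $j(\,\cdot\,;q^{np^2})$, place the powers $(xy)^p$ and $x^n/y^n$ correctly, and absorb the parity-dependent half-integer shifts, all while checking that the genericity hypothesis on $x,y$ keeps every Appell--Lerch sum and every theta quotient pole-free. Once the correction is identified for one convenient normalization of $x,y$, the general generic case follows by analytic continuation in $x$ and $y$.
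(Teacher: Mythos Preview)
This theorem is not proved in the paper at all; it is quoted verbatim as Theorem~1.3 of Hickerson--Mortenson \cite{HM} and then applied as a black box in the proof of Theorem~\ref{main}. There is therefore no ``paper's own proof'' to compare your proposal against. Your write-up is a heuristic sketch of how the result is obtained in \cite{HM}, not something the present paper undertakes.

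As a sketch of the \cite{HM} argument, a few points deserve caution. The decomposition in \cite{HM} is not obtained by a single residue-class slicing that directly produces $j(q^{bt}x;q^a)\cdot m(\cdots)$; rather, it passes through an intermediate expression for $f_{a,b,c}$ in terms of sums of Appell--Lerch series with carefully chosen auxiliary $z$-parameters, and the theta correction $\theta_{n,p}$ arises from repeated application of the change-of-$z$ identity and a separate theta-function identity (their Theorem~1.7), not merely from ``reconciling two slicings.'' Your remark that $(n,p)=1$ ensures ``each residue class is met exactly once'' is not the actual role of coprimality: slicing $s$ modulo $a$ always hits each residue once; coprimality is needed so that the moduli $q^{np^2}$ and $q^{p^2(2n+p)}$ in the theta correction are genuinely distinct from the Appell--Lerch modulus $q^{np(2n+p)}$ and so that no degenerate theta factors appear. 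Finally, the closing appeal to ``analytic continuation in $x$ and $y$'' is not how \cite{HM} proceeds; the identity is established directly for generic $x,y$ by formal $q$-series manipulations, with genericity invoked only to avoid poles.
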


We can now prove Theorem \ref{main}.

\begin{proof}[Proof of Theorem \ref{main}]
The first step is to recognize $B_N(q)$ in a different context. For $N \geq 1$, consider the string function of level $N$ of the affine Lie algebra $A^{(1)}_1$ (e.g., see \cite{LP, SW})
\begin{equation}
\label{eq:string}
\mathcal{C}_{m, \ell}^N(q) = \frac{q^{\frac{m^2-\ell^2}{4N}}}{(q)_{\infty}} \sum_{\substack{\mathbf{n} \in \Z_{\geq 0}^{N-1}\\ \frac{m+\ell}{2N}+(C^{-1}\mathbf{n})_1 \in \Z}} \frac{q^{\mathbf{n}C^{-1}(\mathbf{n} - \mathbf{e}_{\ell})^{T}}}{(q)_{n_1} \cdots (q)_{n_{N-1}}},
\end{equation}
where $\mathbf{n}=(n_1, \dots, n_{N-1})$, $\mathbf{e}_i$ is the $i$-th standard unit vector in $\Z^{N-1}$ (with $\mathbf{e}_0 =\mathbf{e}_N=0$), $C$ is the $A_{N-1}$ Cartan matrix whose inverse $C^{-1}$ is given by
$$(C^{-1})_{i,j} = \min (i,j) - \frac{ij}{N},$$
and $(C^{-1}\mathbf{n})_1$ is the first entry in the vector $C^{-1}\mathbf{n}$. A straightforward computation (see the proof of Theorem 5 in \cite{BHKW}) yields

\begin{equation} \label{rewrite}
B_N(q) = \sum_{\substack{\mathbf{n} \in \Z_{\geq 0}^{N-1}\\ (C^{-1}\mathbf{n})_1 \in \Z}} \frac{q^{\mathbf{n}C^{-1}\mathbf{n}^{T}}}{(q)_{n_1} \cdots (q)_{n_{N-1}}}.
\end{equation}
Comparing \eqref{eq:string} when $\ell = 0$ and $m$ is divisible by $2N$ with (\ref{rewrite}), we have for all $N \geq 2$,

\begin{equation}
\label{eq:equality}
B_N(q)= q^{\frac{-m^2}{4N}}(q)_{\infty}\mathcal{C}_{m, 0}^N(q).
\end{equation}

Next, by Example 1.3 on page 386 of \cite{HM}, we have
$$\mathcal{C}_{m, 0}^N(q) = \frac{1}{(q)_{\infty}^3} f_{1,N+1,1}(q^{1+m/2}, q^{1-m/2},q).$$
Thus from \eqref{eq:equality}, we deduce that for all $N \geq 2$ and $m$ divisible by $2N$,
\begin{equation}
\label{eq:almostfinal}
B_N(q)=  \frac{q^{\frac{-m^2}{4N}}}{(q)_{\infty}^2} f_{1,N+1,1}(q^{1+m/2}, q^{1-m/2},q).
\end{equation}
By Theorem \ref{hickmort}, we have 

$$
\begin{aligned}
f_{1,N+1,1}(q^{1+m/2}, q^{1-m/2},q) & = g_{1,N+1,1}(q^{1+m/2}, q^{1-m/2},q,-1,-1) \\
& \qquad \qquad +  \frac{1}{\overline{J}_{0, N(N+2)}} \theta_{1,N}(q^{1+m/2}, q^{1-m/2},q).
\end{aligned}
$$
Now, observe that
$$g_{1,N+1,1}(q^{1+m/2}, q^{1-m/2},q,-1,-1)=0$$
as there are no poles in the Appell-Lerch series 

$$
m(q^{N(N+1)/2 + m(N+2)/2}, q^{N(N+2)}, -1)
$$
and
$$
m(q^{N(N+1)/2 -m(N+2)/2}, q^{N(N+2)}, -1)
$$
(indeed, this is true whenever $m(N+2)/2 \not\equiv \pm N(N+1)/2 \pmod{N(N+2)}$, which is always the case when $m \equiv 0 \pmod{2N}$) and 
$j(q^{1+m/2};q)=j(q^{1-m/2};q)=0$. Thus, 
$$B_N(q) = \frac{q^{\frac{-m^2}{4N}}}{(q)_{\infty}^2 \overline{J}_{0,N(N+2)}} \theta_{1,N}(q^{1+m/2}, q^{1-m/2},q).$$
We now take $m=0$. The factor $q^{\frac{-m^2}{4N}}$ disappears and $\theta_{1,N}(q, q,q)$ is given as in (\ref{theta}). This proves the result. 

\end{proof}

\section{Concluding remarks}

There are several avenues for further study. First, Table 1 suggests that as $N \to \infty$, the limiting value of $B_N(q)$ is strictly larger than $(q)_{\infty}$. This is a stronger statement than \cite[Corollary 10, (2)]{BHKW}. Thus, it would be desirable to compute both asymptotics for $B_N(q)$ and the correct limiting value of $\psi(n,k,q,N)$ as $N \to \infty$. Second, for $N=2$, $3$ and $4$, one can reduce the number of products of quotients of theta functions occurring in Theorem \ref{main} by first invoking Theorems 1.9--1.11 in \cite{HM}, then performing routine (yet possibly involved) simplifications \cite{FG}. In these cases, we require that $m \equiv 0 \pmod{2N}$, $m \not\equiv 0 \pmod{N(N+2)}$ and, if $m$ is odd, $m \not\equiv \pm (N+1) \pmod{2(N+2)}$. For example, one can recover (\ref{slater83}) in this manner. The details are left to the interested reader. Third, given that \eqref{eq:equality} is a key step in the proof of Theorem \ref{main}, it is natural to wonder if string functions which generalize \eqref{eq:string} (see \cite{hkkoty, kns}) can also be realized in terms of computing an appropriate probability. For recent related works on string functions, see \cite{mort2, mort1}. Finally, can Theorem \ref{main} be understood via Markov chains, group theory or, possibly, Hall-Littlewood functions \cite{stem}?

\section*{Acknowledgements}
The authors thank Ole Warnaar for pointing out references \cite{hkkoty, kns, LP}. The first author was partially supported by ANR COMBIN\'e ANR-19-CE48-0011 and the Impulsion grant of IdexLyon. The second author was partially supported by Enterprise Ireland CS20212030.

\end{document}